\documentclass[11pt]{article}
\usepackage[T1]{fontenc}
\usepackage[utf8]{inputenc}
\usepackage{setspace}
\usepackage{newtxtext,newtxmath}
\usepackage[margin=1.25in]{geometry}
\usepackage{url}
\usepackage{array}
\usepackage{graphicx}
\usepackage{subcaption}
\usepackage{color}
\makeatletter
\@ifundefined{openbox}{}{}
\makeatother

\usepackage{amsmath,amssymb,amsthm,mathtools}
\usepackage{bm}
\usepackage{booktabs}
\usepackage{tabularx}
\usepackage{enumitem}
\usepackage[dvipsnames]{xcolor}
\usepackage[colorlinks=true,allcolors=green!60!black]{hyperref}

\usepackage{bbm}
\usepackage{multirow}
\usepackage{authblk}
\usepackage[round,compress]{natbib}
\usepackage{algorithm}
\usepackage{algorithmic}

\renewcommand{\det}[1]{\left| #1 \right|}

\renewcommand{\P}{{\bf P}}

\newcommand{\R}{{\mathbb R}}
\renewcommand{\S}{{\bf S}}

\newcommand{\ben}{\begin{enumerate}}
\newcommand{\een}{\end{enumerate}}
\newcommand{\beq}{\begin{equation}}
\newcommand{\eeq}{\end{equation}}

\usepackage{booktabs,array}

\newcount\rowc

\newcommand{\diff}{\mathrm{d}}
\newcommand{\vnorm}[1]{\left\lVert #1 \right\rVert}
\newcommand{\Pbb}{\mathbb{P}}
\newcommand{\Ebb}{\mathbb{E}}

\def\P{\mathbb{P}}
\setlist*[enumerate]{label=(\roman*)}

\newtheorem{theorem}{Theorem}[section]
\newtheorem{lemma}[theorem]{Lemma}

\newtheorem{remark}[theorem]{Remark}

\newtheorem{assumption}{Assumption}
\newtheorem{definition}{Definition}

\title{A New Look at Bayesian Testing}
\author[1]{Jyotishka Datta}
\author[2]{Nicholas G. Polson}
\author[3]{Vadim Sokolov}
\author[4]{Daniel Zantedeschi}
\affil[1]{Department of Statistics, Virginia Tech}
\affil[2]{Booth School of Business, University of Chicago}
\affil[3]{Department of Systems Engineering and Operations Research, George Mason University}
\affil[4]{Muma College of Business, University of South Florida; \texttt{danielz@usf.edu}}
\date{}

\begin{document}

\maketitle
\begin{abstract}
We identify the critical deviation scale governing Bayesian evidence accumulation in regular parametric testing. Under integrated Bayes risk with zero--one loss, the risk-optimal rejection boundary lies in a moderate deviation regime, with a square-root logarithmic inflation relative to the usual local asymptotic normal scale. Under Cram\'er regularity, local prior smoothness at the null, and symmetric loss, we derive the sharp threshold and show that its leading logarithmic term is universal across regular priors, while lower-order constants depend on the local prior density, Fisher information, and prior model odds. The result extends to one-parameter exponential families through local asymptotic normality and places Jeffreys' testing threshold, the Bayesian information criterion penalty, and Chernoff--Stein type error-exponent arguments within a common asymptotic moderate deviation framework.
\end{abstract}

\medskip\noindent\textit{Keywords:} Bayes factor; Bayes factor calibration; Bayes risk; BIC; hypothesis testing; Lindley paradox; moderate deviation; threshold calibration

\section{Introduction}\label{sec:intro}

In regular parametric models, classical testing operates at the local asymptotic normal scale: deviations of order $n^{-1/2}$ from the null yield statistics of order $O(1)$, and the critical value $z_{1-\alpha/2}$ is held fixed as $n$ grows. Bayesian evidence accumulation requires a larger threshold. The reason is structural: the Bayes factor compares the null density $f(\bm{x}\mid\theta_0)$ with the marginal likelihood $m(\bm{x})=\int f(\bm{x}\mid\theta)\,\pi(\diff\theta)$, which integrates over the alternative parameter space. This integration introduces a complexity penalty, because prior mass allocated to alternatives near $\theta_0$ dilutes the evidence, and a stronger departure from the null is needed before the data decisively favour $H_a$. The central question is: \emph{at what deviation scale does this evidence become decisive?}

The answer involves a logarithmic correction to the classical LAN scale. When the rejection boundary is chosen to minimize integrated Bayes risk, the critical deviation scale is
\begin{equation}\label{eq:scales}
\lambda_n \asymp \sqrt{\frac{\log n}{n}}\,,\qquad
t_n = \frac{\sqrt{n}\,\lambda_n}{\sigma} \asymp \sqrt{\log n}\,,\qquad
t_n^2 = \log n + O(1).
\end{equation}
These three equivalent expressions, in the parameter, standardized statistic, and squared-statistic scales, characterize the boundary at which the type~I and type~II risk contributions are asymptotically balanced. The leading term $\log n$ is universal across regular models and priors; the $O(1)$ constants depend on the local prior density at $\theta_0$, the Fisher information, and the prior model odds.

This moderate deviation boundary clarifies several phenomena that have appeared separately in the testing literature: Jeffreys' $\sqrt{\log n}$ threshold for Bayes factor calibration, Lindley's paradox arising from the divergence between this boundary and the fixed-$\alpha$ critical value, the BIC model selection penalty $(d/2)\log n$, the Rubin--Sethuraman moderate deviation calculus for Bayes risk, and the polynomial prefactors in Chernoff--Stein error exponents. All arise from the same asymptotic regime in which Gaussian tail probabilities, evaluated at $\sqrt{\log n}$-scale deviations, decay polynomially rather than exponentially. The present paper places these phenomena on a common asymptotic coordinate system.

Beyond unification, the paper makes three technical contributions. First, we derive the critical deviation scale \eqref{eq:scales} under explicit regularity conditions, namely the Cram\'er condition, prior smoothness, and symmetric loss, and obtain the sharp threshold formula with identified constants and remainder $O\!\bigl(\sqrt{(\log n)/n}\bigr)$. Second, we prove a uniform moderate deviation lemma providing the tail approximations that drive the analysis, and show via Laplace expansion of the marginal likelihood that the $\sqrt{\log n}$ scaling is universal across regular priors. Third, we extend the threshold to one-parameter exponential families under local asymptotic normality, and provide algebraic comparisons with fixed-$\alpha$ and e-value calibrations.

\section{Testing framework and Bayes risk}\label{sec:setup}

\subsection{Model and notation}\label{sec:model}

Let $X_1,\ldots,X_n$ be i.i.d.\ from $\{P_\theta:\theta\in\Theta\subset\R\}$ with densities $f(x\mid\theta)$ and write $\bm{x}=(x_1,\ldots,x_n)$. We test $H_0:\theta=\theta_0$ versus $H_a:\theta\neq\theta_0$ using a decision rule $\delta:\mathcal{X}^n\to\{0,1\}$ with $\delta(\bm{x})=1$ denoting rejection. Under $H_a$, $\theta$ has prior density $\pi(\cdot)$ on $\Theta$, and the prior model probabilities are $\pi_0=\Pr(H_0)$ and $\pi_a=\Pr(H_a)$.

The parameter deviation from the null is $\lambda_n=|\hat\theta-\theta_0|$, and the standardized statistic is $t_n=\sqrt{n}\,\lambda_n/\sigma$, where $\sigma^2=\operatorname{Var}_{\theta_0}(X_1)$.

\subsection{Deviation regimes}\label{sec:regimes}

The scale of $\lambda_n$ determines three asymptotic regimes, each producing qualitatively different tail behaviour.

\begin{definition}[Deviation scales]\label{def:regimes}
\begin{enumerate}
\item \textbf{CLT regime:} $\lambda_n=O(n^{-1/2})$, equivalently $t_n=O(1)$.
Tail probabilities $\Pbb(|\bar{X}_n-\theta_0|>\lambda_n)=O(1)$.

\item \textbf{Moderate deviation regime:} $\lambda_n\to 0$ and $n\lambda_n^2\to\infty$.
On the Rubin--Sethuraman boundary $\lambda_n=a\sqrt{(\log n)/n}$ with $a>0$ fixed,
$t_n=(a/\sigma)\sqrt{\log n}$,
and tail probabilities decay polynomially:
\[
\Pbb(|\bar{X}_n-\theta_0|>\lambda_n) \sim C\,n^{-a^2/(2\sigma^2)}/\sqrt{\log n}.
\]

\item \textbf{Large deviation regime:} $\lambda_n=c$ fixed.
Tail probabilities decay exponentially:
$\Pbb(|\bar{X}_n-\theta_0|>c) = \exp\{-nI(c)+o(n)\}$,
where $I(c)=\sup_t\{tc-\psi(t)\}$ is Cram\'er's rate function and $\psi(t)=\log\Ebb[e^{tX_1}]$.
\end{enumerate}
\end{definition}

The moderate deviation regime interpolates between the CLT and large deviation scales. In the CLT regime, deviations of order $n^{-1/2}$ produce $O(1)$ tail probabilities, so the tail does not decay and the rejection rate is constant in $n$. In the large deviation regime, deviations of order $O(1)$ produce exponentially decaying probabilities $\exp(-nI)$, and the likelihood ratio test has exponentially vanishing error. Between these extremes, the moderate deviation scale $\lambda_n=a\sqrt{(\log n)/n}$ yields the polynomial tail decay $n^{-a^2/(2\sigma^2)}$. This is precisely the decay rate at which Bayesian risk balancing occurs, and the constant $a$ can be chosen to equalize the type~I and type~II risk contributions.

Fixed-$\alpha$ classical testing operates on the CLT scale with constant critical value $z_{1-\alpha/2}$. Simple-versus-simple Bayes risk is governed by the large deviation scale via the Chernoff--Stein exponent. The Bayes-rule threshold for composite-alternative testing lies on the intermediate RS boundary, where both likelihood concentration and prior mass allocation influence the boundary.

\subsection{Bayes risk decomposition}\label{sec:risk-decomp}

Denote the null density by $f(\bm{x}\mid\theta_0)$ and the Bayes mixture by
$m(\bm{x})=\int_\Theta f(\bm{x}\mid\theta)\,\pi(\diff\theta)$.
The log Bayes factor in favour of $H_0$ is
$\log BF_{01}(\bm{x}) = \log f(\bm{x}\mid\theta_0) - \log m(\bm{x})$.

The identity $m(\bm{x})\,p(\theta\mid\bm{x}) = f(\bm{x}\mid\theta)\,\pi(\theta)$
yields two forms of the integrated Bayes risk under $0$--$1$ loss.

\textit{Posterior-risk form} (integrate $\theta$ first, then $\bm{x}$):
\[
B_n(\pi,\delta)=\int m(\bm{x})\left\{\int L\bigl(\theta,\delta(\bm{x})\bigr)\,p(\theta\mid\bm{x})\,\diff\theta\right\}\diff\bm{x}.
\]
This yields the pointwise Bayes rule: $\delta^\star(\bm{x})=\mathbf{1}\{\P(\theta\in\Theta_0\mid\bm{x})<\tfrac{1}{2}\}$,
equivalently $BF_{01}<\pi_a/\pi_0$ \citep{berger2003could}.

\textit{Prior-risk form} (integrate $\bm{x}$ first, then $\theta$):
\[
B_n(\pi,\delta)=\pi_0\,\alpha_n+\pi_a\,\beta_n,
\]
where $\alpha_n$ and $\beta_n$ are the type~I and integrated type~II error probabilities.
This form exposes how threshold rules $\delta(\bm{x})=\mathbf{1}\{T_n(\bm{x})>c_n\}$ can be calibrated by moderate deviations of $T_n$ \citep{rubin1965probabilities,RubinSethuraman1965_BRE}.

\subsection{Interpretation of the Bayes-rule boundary}\label{sec:risk-interp}

The Bayes-rule rejection boundary arises from balancing the two terms in $B_n=\pi_0\alpha_n+\pi_a\beta_n$. The type~I component $\alpha_n$ is controlled by the tail probability of the test statistic under $H_0$: for a threshold rule rejecting when $|t_n|>c$, this is $\Pbb_{\theta_0}(|t_n|>c)$, which decays exponentially in~$c^2$ by Gaussian tail asymptotics. The type~II component $\beta_n$ integrates the probability of failing to reject over the alternative prior, and involves the marginal likelihood $m(\bm{x})$. For alternatives near the null, the posterior concentrates at rate $\sqrt{n}$, producing a polynomial factor $n^{-1/2}$ in the integrated type~II error.

Intuitively, the $\exp(-c^2/2)$ decay of the type~I tail must match the $n^{-1/2}$ from posterior concentration to minimize total risk. The moderate deviation scale emerges because these two decay rates, exponential in $c^2$ for type~I and polynomial in $n$ for type~II, balance precisely when $c^2\sim\log n$. At this threshold, the Gaussian tail $\exp(-c^2/2)$ produces a polynomial factor matching the $n^{-1/2}$ from the alternative integration. If the threshold were held fixed (as in classical testing), type~I error would not decay, while type~II error would vanish; the resulting procedure would be asymptotically suboptimal under integrated Bayes risk. Conversely, if the threshold grew too quickly, type~I error would be negligible but type~II error would dominate. The logarithmic rate $c\sim\sqrt{\log n}$ is the unique balancing point.

\subsection{BIC and the Laplace connection}\label{sec:bic}

The BIC approximation to the log marginal likelihood,
\[
\log m(\bm{x})=\log f(\bm{x}\mid\hat\theta)-\frac{d}{2}\log n+O(1),
\]
arises from a Laplace expansion about the MLE under standard regularity.
Both BIC and the moderate deviation boundary share the same algebraic mechanism.
In model selection, the Laplace prefactor contributes the additive penalty $-(d/2)\log n$;
at the RS boundary $\lambda_n=a\sqrt{(\log n)/n}$,
\[
\Pbb\!\bigl(|\bar{X}_n-\theta_0|>\lambda_n\bigr)
\approx \exp\!\left\{-\frac{a^2}{2\sigma^2}\log n\right\}=n^{-a^2/(2\sigma^2)},
\]
producing a polynomial factor through the same logarithmic correction on the log-scale.

\section{Moderate deviation approximations}\label{sec:md-approx}

Throughout, $\Pbb_\theta$ and $\Ebb_\theta$ refer to the sampling distribution $P_\theta^n$.

\begin{assumption}[Cram\'er condition]\label{ass:cramer}
The cumulant generating function $\psi(t)=\log\Ebb[e^{tX_1}]$ exists in a neighbourhood of the origin.
\end{assumption}

\begin{assumption}[Prior regularity]\label{ass:prior}
The prior density $\pi$ under $H_a$ satisfies $\pi\in C^3(\R)$ with $\pi(\theta_0)\in(0,\infty)$, and $\pi$ has at most polynomial growth: $|\pi(\theta)|\le C(1+|\theta|^m)$ for some $C,m>0$.
\end{assumption}

\begin{lemma}[Uniform moderate deviation approximation]\label{lem:uniform-md}
Under Assumption~\ref{ass:cramer}, let $\sigma^2=\psi''(0)=\operatorname{Var}_{\theta_0}(X_1)\in(0,\infty)$ and $\lambda_n=a\sqrt{(\log n)/n}$ with $a>0$ fixed. Then
\begin{align*}
\Pbb_{\theta_0}\!\bigl(|\bar{X}_n-\theta_0|>\lambda_n\bigr)
  &= \frac{2\sigma}{\sqrt{2\pi n}\,\lambda_n}\,
     \exp\!\Bigl({-}\frac{n\lambda_n^2}{2\sigma^2}\Bigr)\,(1+o(1))\\
  &= \frac{\sqrt{2}\,\sigma}{a\sqrt{\pi\log n}}\,
     n^{-a^2/(2\sigma^2)}\,(1+o(1)),
\end{align*}
as $n\to\infty$, uniformly over $a$ in compact subsets of $(0,\infty)$.
\end{lemma}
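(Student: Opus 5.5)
We reduce the statement to Cramér's classical moderate deviation theorem for sums of i.i.d.\ variables and then apply the Mills-ratio asymptotics of the Gaussian tail. Write $S_n=\sum_{i=1}^n (X_i-\theta_0)$, where $\theta_0=\Ebb_{\theta_0}X_1$ is the centring implicit in the statement. Put $x_n(a)=\sqrt{n}\,\lambda_n/\sigma=(a/\sigma)\sqrt{\log n}$, so that
\[
\Pbb_{\theta_0}\bigl(|\bar X_n-\theta_0|>\lambda_n\bigr)=\Pbb\bigl(S_n>x_n\sigma\sqrt n\bigr)+\Pbb\bigl(-S_n>x_n\sigma\sqrt n\bigr).
\]
Assumption~\ref{ass:cramer} applies to both $X_1-\theta_0$ and $-(X_1-\theta_0)$, so the two tails are handled identically.

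\textbf{Step 1 (Cramér's theorem).} Under the Cramér condition, for $x\ge0$ with $x=o(\sqrt n)$ (Petrov, \emph{Sums of Independent Random Variables}, Ch.~VIII),
\[
\frac{\Pbb(S_n>x\sigma\sqrt n)}{1-\Phi(x)}=\exp\Bigl\{\frac{x^3}{\sqrt n}\,\Lambda\Bigl(\frac{x}{\sqrt n}\Bigr)\Bigr\}\Bigl(1+O\Bigl(\frac{1+x}{\sqrt n}\Bigr)\Bigr).
\]
Here $\Lambda$ is the Cramér series, which is bounded near $0$, and the $O(\cdot)$ is uniform in $x$ in such a range. This theorem needs no lattice assumption.

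\textbf{Step 2 (our range of $x$).} For $a$ in a compact $K\subset(0,\infty)$ we have $x_n(a)\le C_K\sqrt{\log n}$. Hence $x_n^3/\sqrt n\le C_K^3(\log n)^{3/2}/\sqrt n\to0$ and $(1+x_n)/\sqrt n\to0$, uniformly over $K$. Both tails therefore equal $(1-\Phi(x_n))(1+o(1))$ uniformly.

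\textbf{Step 3 (Mills ratio).} Since $x_n\ge c_K\sqrt{\log n}\to\infty$ uniformly on $K$, the bound $\phi(x)(x^{-1}-x^{-3})\le1-\Phi(x)\le\phi(x)/x$ gives $1-\Phi(x_n)=\phi(x_n)x_n^{-1}(1+o(1))$ uniformly. Summing the two tails gives $2\phi(x_n)/x_n$. Substituting $x_n=\sqrt n\lambda_n/\sigma$ yields the first displayed form:
\[
\frac{2\phi(x_n)}{x_n}=\frac{2\sigma}{\sqrt{2\pi n}\,\lambda_n}\,e^{-n\lambda_n^2/(2\sigma^2)}.
\]
Then $n\lambda_n^2=a^2\log n$ and $\sqrt{n}\lambda_n=a\sqrt{\log n}$ give the second form, $\sqrt2\,\sigma\,n^{-a^2/(2\sigma^2)}/(a\sqrt{\pi\log n})$.

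\textbf{Main obstacle.} The only substantive step is Step~1, specifically its uniformity in $x$. If we prove it rather than cite it, we would use exponential tilting. Tilt by $h=h(x)$ solving $\psi'(h)=x\sigma/\sqrt n$, which is possible for small arguments because $\psi$ is analytic near $0$. This gives
\[
\Pbb(S_n>x\sigma\sqrt n)=e^{-n(hm-\psi(h))}\,\Ebb_h\bigl[e^{-h(S_n-nm)};\,S_n>nm\bigr],\qquad m=x\sigma/\sqrt n .
\]
We would expand $n(hm-\psi(h))=x^2/2+O(x^3/\sqrt n)$ and evaluate the tilted expectation with Berry--Esseen applied to the tilted variables, whose third moments are uniformly bounded for $h$ near $0$. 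The resulting Gaussian integral $\int_0^\infty e^{-h\sqrt n\,\sigma_h u}\,\diff\Phi(u)$ reproduces $e^{x^2/2}(1-\Phi(x))$ up to the relative error above.

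The delicate point is the Berry--Esseen error $O(n^{-1/2})$. It must be small relative to this integral, which is of order $1/(h\sqrt n)\asymp 1/x$. The relative error is therefore $O(x/\sqrt n)$, which is $o(1)$ in our range. To cover lattice laws, we would smooth via integration by parts against the exponential weight instead of using a local limit theorem.
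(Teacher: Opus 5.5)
Your proposal is correct and follows essentially the same route as the paper: apply Cramér's moderate deviation theorem on the range $x_n=(a/\sigma)\sqrt{\log n}$, where $x_n^3/\sqrt{n}\to 0$, then use Mills' ratio and double for the two tails. Your version is a bit more careful about uniformity over compact sets of $a$, using the uniform $O$-term in Petrov's form and the explicit Mills-ratio sandwich, whereas the paper simply attributes uniformity to the cited Cramér approximation; your optional tilting sketch (Berry--Esseen for the tilted variables, then integration by parts against the exponential weight) is also sound.
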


\begin{proof}
Set $z_n:=\sqrt{n}\,\lambda_n/\sigma=(a/\sigma)\sqrt{\log n}$.
By the Cram\'er moderate deviation theorem \citep{rubin1965probabilities}, under
Assumption~\ref{ass:cramer}, for $z_n\to\infty$ with $z_n=o(n^{1/6})$,
\[
\Pbb_{\theta_0}\!\bigl(\bar{X}_n-\theta_0>\lambda_n\bigr)
=\bigl(1-\Phi(z_n)\bigr)\bigl(1+O(z_n^3/\sqrt{n})\bigr).
\]
Since $z_n=(a/\sigma)\sqrt{\log n}=o(n^{1/6})$ and $z_n^3/\sqrt{n}=O\bigl((\log n)^{3/2}/\sqrt{n}\bigr)\to 0$,
the correction vanishes. Mills' ratio $(1-\Phi(z))\sim\varphi(z)/z$ as $z\to\infty$ then gives
\[
\Pbb_{\theta_0}\!\bigl(\bar{X}_n-\theta_0>\lambda_n\bigr)
\sim \frac{1}{\sqrt{2\pi}}\frac{e^{-z_n^2/2}}{z_n}
=\frac{\sigma}{a\sqrt{2\pi\log n}}\,n^{-a^2/(2\sigma^2)}.
\]
Doubling for the two-sided event and substituting $\lambda_n=a\sqrt{(\log n)/n}$ gives both displays.
Uniformity over compact subsets of $a\in(0,\infty)$ follows from the uniformity of the Cram\'er approximation in $z_n$; see \citet{eichelsbacher2002moderate}.
\end{proof}

\begin{remark}[Saddlepoint refinement]\label{rem:saddlepoint}
Saddlepoint methods \citep{daniels1954saddlepoint} provide the density of $\hat\theta$ with relative error $O(n^{-1})$:
\begin{equation}
f(\hat\theta\mid\theta)\approx\left(\frac{n}{2\pi\,\psi''(t^*)}\right)^{1/2}
\exp\Big(n\big[\psi(t^*)-t^*\hat\theta\big]\Big)\big(1+O(n^{-1})\big),
\quad \psi'(t^*)=\hat\theta.
\label{eq:saddlepoint}
\end{equation}
Since $\psi(t^*)-t^*\hat\theta=-I(\hat\theta)$, the exponent is $-nI(\hat\theta)$, bridging large deviation exponents and the polynomial prefactors entering the moderate deviation regime.
\end{remark}

\section{Risk-optimal threshold}\label{sec:main-result}

\subsection{Heuristic risk balancing}\label{sec:risk-min}

This section presents an intuitive, heuristic derivation of the moderate deviation threshold. The rigorous result with detailed asymptotics is given in Theorem~\ref{thm:main}.

The integrated Bayes risk under $0$--$1$ loss decomposes as
\begin{equation}
R_n(c)=\pi_0\,\Pbb_{\theta_0}\!\bigl(|t_n|>c\bigr)
  +\pi_a\int_{\theta\neq\theta_0}\Pbb_\theta\!\bigl(|t_n|\le c\bigr)\,\pi(\theta)\,\diff\theta.
\label{eq:risk-explicit}
\end{equation}
The type~I term is evaluated by the moderate deviation expansion (Lemma~\ref{lem:uniform-md}):
\[
\pi_0\,\Pbb_{\theta_0}(|t_n|>c)\sim\frac{\pi_0\sqrt{2}}{\sqrt{\pi}\,c}\,e^{-c^2/2},
\qquad c\to\infty.
\]
The type~II term is evaluated by localization of the prior near $\theta_0$. At the alternative $\theta$ with $|\theta-\theta_0|\le\sigma c/\sqrt{n}$, the posterior concentrates on $\bar{x}$ and $\Pbb_\theta(|t_n|\le c)\approx 1$ by concentration. Thus
\begin{multline*}
\pi_a\int\Pbb_\theta(|t_n|\le c)\,\pi(\theta)\,\diff\theta
\approx\pi_a\int_{|\theta-\theta_0|\le\sigma c/\sqrt{n}}\pi(\theta)\,d\theta\\
\approx 2\pi_a c_\pi\cdot\frac{\sigma c}{\sqrt{n}}\,(1+o(1)),
\qquad c\to\infty,\ n\to\infty.
\end{multline*}
Setting $\partial R_n/\partial c=0$ to balance type~I and type~II contributions:
\[
\frac{\partial}{\partial c}\left[\pi_0\sqrt{\tfrac{2}{\pi}}\,\frac{e^{-c^2/2}}{c}(1+o(1))\right]
=2\pi_a c_\pi\sigma\,n^{-1/2}(1+o(1)).
\]
At leading exponential order, the type~I rate is governed by $e^{-c^2/2}$ (the polynomial prefactors in $c$ do not affect the exponential balance). Matching exponential orders:
\[
e^{-c^2/2}\asymp n^{-1/2}\qquad\Longleftrightarrow\qquad c^2\asymp\log n.
\]
This heuristic argument identifies the correct scaling but does not determine the threshold constant or remainder; we call the resulting boundary ``risk-optimal'' only after the rigorous derivation in Theorem~\ref{thm:main}, which obtains $c^2=\log n+O(1)$ with explicit constants via Laplace expansion of the Bayes factor.

\subsection{Main theorem}\label{sec:theorem}

\begin{theorem}[Risk-optimal threshold]\label{thm:main}
Consider testing $H_0:\theta=0$ versus $H_a:\theta\neq 0$ with $\bar{X}_n\sim\mathcal{N}(\theta,\sigma^2/n)$ and $\sigma$ known. Let the prior under $H_a$ satisfy Assumption~\ref{ass:prior} with $\pi(0)=c_\pi>0$. Under $0$--$1$ loss with prior weights $\pi_0,\pi_a>0$, the Bayes factor in favour of $H_0$ satisfies
\begin{equation}
\begin{split}
BF_{01}(\bar{x})&=\frac{\sqrt{n}}{\sigma\sqrt{2\pi}\,c_\pi}\,
e^{-t^2/2}\\
&\quad\times\bigl(1+O\bigl(\!\sqrt{(\log n)/n}\bigr)\bigr),
\end{split}
\label{eq:bf-expansion}
\end{equation}
where $t=\sqrt{n}\,\bar{x}/\sigma$,
uniformly for~$\bar{x}$ on the moderate deviation scale.
Setting $BF_{01}=\pi_a/\pi_0$ and solving yields
\begin{equation}
\begin{split}
t_{\mathrm{crit}}^2 &= \log n + \log(c_\pi^{-2}) - \log(2\pi\sigma^2)\\
&\quad + 2\log(\pi_0/\pi_a) + O\bigl(\!\sqrt{(\log n)/n}\bigr).
\end{split}
\label{eq:threshold}
\end{equation}
\end{theorem}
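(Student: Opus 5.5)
The plan is to compute both pieces of $BF_{01}(\bar x)=f(\bar x\mid 0)/m(\bar x)$ directly in the Gaussian model, and then invert the resulting expansion. The numerator is exact:
\[
f(\bar x\mid 0)=\frac{\sqrt n}{\sigma\sqrt{2\pi}}\,e^{-t^2/2},\qquad t=\sqrt n\,\bar x/\sigma .
\]
So everything reduces to showing that the marginal $m(\bar x)=\int \varphi_n(\bar x-\theta)\,\pi(\theta)\,\diff\theta$ satisfies
\[
m(\bar x)=c_\pi\bigl(1+O(\sqrt{(\log n)/n})\bigr)
\]
uniformly for $|\bar x|\le A\sqrt{(\log n)/n}$, with $A$ fixed. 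Here $\varphi_n$ denotes the $\mathcal N(0,\sigma^2/n)$ density.

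\textbf{Step 1 (split the marginal integral).} I would split the integral at $|\theta-\bar x|\le \delta$ for a fixed small $\delta>0$.
\begin{itemize}
\item On the complement, the polynomial growth bound of Assumption~\ref{ass:prior} gives a contribution at most
\[
C\int_{|u|>\delta}(1+|\bar x|+|u|)^m\varphi_n(u)\,\diff u=O(e^{-n\delta^2/(4\sigma^2)}),
\]
which is negligible against any power of $n$.
\item On the inner region, I Taylor expand $\pi$ about $\bar x$, using $\pi\in C^3$ and hence bounded derivatives near $0$:
\[
\pi(\theta)=\pi(\bar x)+\pi'(\bar x)(\theta-\bar x)+\tfrac12\pi''(\xi)(\theta-\bar x)^2 .
\]
The linear term integrates to zero by symmetry of $\varphi_n$. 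The quadratic term contributes $O(\sigma^2/n)$. Truncating the Gaussian mass to $|u|\le\delta$ costs only an exponentially small error.
\end{itemize}
Together these give $m(\bar x)=\pi(\bar x)+O(1/n)$ uniformly for $\bar x$ near $0$.

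\textbf{Step 2 (recentre at the null).} A first-order expansion gives $\pi(\bar x)=c_\pi+\pi'(\eta)\bar x$. On the moderate deviation scale $|\bar x|=O(\sqrt{(\log n)/n})$, hence
\[
m(\bar x)=c_\pi\bigl(1+O(\sqrt{(\log n)/n})\bigr).
\]
The $O(1/n)$ term from Step 1 is absorbed. Because $c_\pi>0$, inverting $1+O(\cdot)$ preserves the order, and dividing the numerator by $m(\bar x)$ yields \eqref{eq:bf-expansion}.

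\textbf{Step 3 (solve for the threshold).} Taking logarithms of \eqref{eq:bf-expansion} gives
\[
\log BF_{01}=\tfrac12\log n-\log\bigl(\sigma\sqrt{2\pi}\,c_\pi\bigr)-\tfrac{t^2}{2}+O(\sqrt{(\log n)/n}),
\]
using $\log(1+O(\epsilon))=O(\epsilon)$. Setting this equal to $\log(\pi_a/\pi_0)$ and multiplying by $2$ gives
\[
t_{\rm crit}^2=\log n-\log(2\pi\sigma^2)-\log c_\pi^2+2\log(\pi_0/\pi_a)+O(\sqrt{(\log n)/n}),
\]
which is \eqref{eq:threshold}.

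One consistency check is needed. The solution has $t_{\rm crit}=\sqrt{\log n}\,(1+o(1))$, so the corresponding $\bar x$ lies in the moderate deviation window where Step 2 is uniform. This closes the argument, because we may take $A$ slightly larger than $\sigma$.

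\textbf{Main obstacle.} The delicate point is the uniformity in Step 2, namely controlling the tail of the marginal integral and the Taylor remainder simultaneously over the whole moderate deviation window. The linear term $\pi'(0)\bar x$ is what limits the remainder to $O(\sqrt{(\log n)/n})$ rather than $O(1/n)$. Getting the polynomial-growth tail bound to interact cleanly with the Gaussian kernel is the step I would check most carefully; I expect a direct Chernoff-type bound on $\varphi_n$ to suffice.
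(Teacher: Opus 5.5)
Your proposal is correct and follows essentially the same route as the paper. You split the marginal integral into a local region, where a Taylor expansion about $\bar x$ applies and the linear term vanishes by symmetry, and a tail controlled by polynomial growth against Gaussian decay; you then recentre via $\pi(\bar x)=c_\pi+O(|\bar x|)$ and take logarithms. The differences are cosmetic: you cut at a fixed $\delta$ in the $\theta$-scale and use a Lagrange remainder, so only $C^2$ near $0$ is needed, whereas the paper cuts at $|u|\le M_n$ and uses a $C^3$ expansion. Your check that the solved $t_{\rm crit}$ actually lies inside the uniformity window is a step the paper leaves implicit.
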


Three structural features of \eqref{eq:threshold} merit emphasis. The leading behaviour $t_{\mathrm{crit}}^2\sim\log n$ is universal: it reflects the balance between the $\sqrt{n}$ prefactor and the Gaussian tail $\exp(-t^2/2)$ in \eqref{eq:bf-expansion}, and depends on neither the prior family nor the parametric model. The additive constant, by contrast, is prior-sensitive, determined by the local density $c_\pi=\pi(0)$; a larger~$\pi(0)$ shifts the indifference boundary by~$\log(c_\pi^{-2})$. Finally, the remainder is
$O\!\bigl(\sqrt{(\log n)/n}\bigr)$
under the stated regularity, so the displayed constants are identified up to a vanishing deterministic correction on the moderate deviation band.

\begin{proof}
\textit{Step~1: Exact marginal representation.}
Under $H_a$, the marginal likelihood is
\[
m_a(\bar{x})=\int f(\bar{x}\mid\theta)\,\pi(\theta)\,d\theta
=\frac{\sqrt{n}}{\sigma\sqrt{2\pi}}\int
\exp\!\left(-\frac{n(\bar{x}-\theta)^2}{2\sigma^2}\right)\pi(\theta)\,d\theta.
\]
Substituting $u=\sqrt{n}(\theta-\bar{x})/\sigma$:
\[
m_a(\bar{x})=\frac{1}{\sqrt{2\pi}}\int_{\R} e^{-u^2/2}\,
\pi\!\left(\bar{x}+\frac{\sigma u}{\sqrt{n}}\right)du.
\]

\textit{Step~2: Good-set / bad-set decomposition.}
We localize the integral by splitting into a central region where a uniform Taylor expansion is valid, and a tail region controlled by Gaussian decay. Let $M_n\to\infty$ be any sequence satisfying $M_n/\sqrt{n}\to 0$, and define
\[
G_n=\{|u|\le M_n\},\qquad B_n=\{|u|>M_n\}.
\]
The specific rate at which $M_n$ diverges is not important; any sequence satisfying these conditions suffices. For instance, one may take $M_n=n^{\alpha}$ for any $0<\alpha<1/2$ (e.g., $\alpha=1/8$).

Write $m_a(\bar{x})=I_G(\bar{x})+I_B(\bar{x})$, where
\begin{align*}
I_G(\bar{x})&=\frac{1}{\sqrt{2\pi}}\int_{G_n} e^{-u^2/2}\,\pi\!\left(\bar{x}+\frac{\sigma u}{\sqrt{n}}\right)du,\\
I_B(\bar{x})&=\frac{1}{\sqrt{2\pi}}\int_{B_n} e^{-u^2/2}\,\pi\!\left(\bar{x}+\frac{\sigma u}{\sqrt{n}}\right)du.
\end{align*}

\textit{Step~3: Uniform Taylor expansion on the good set.}
Since $M_n/\sqrt{n}\to 0$, the argument $\bar{x}+\sigma u/\sqrt{n}$ remains in a shrinking neighbourhood of zero uniformly over $|u|\le M_n$. Indeed, on the moderate deviation band $|\bar{x}|=O(\sqrt{(\log n)/n})$,
\[
\left|\bar{x}+\frac{\sigma u}{\sqrt{n}}\right|
\le |\bar{x}|+\frac{\sigma M_n}{\sqrt{n}}
= O\!\left(\sqrt{\frac{\log n}{n}}\right)+O\!\left(\frac{M_n}{\sqrt{n}}\right)\to 0.
\]
Hence, for all sufficiently large $n$, the argument of $\pi$ remains in a fixed compact neighbourhood of $0$ on which $\pi'''$ is bounded (since $\pi\in C^3(\R)$ by Assumption~\ref{ass:prior}). The second-order Taylor expansion is therefore valid uniformly on $G_n$:
\[
\pi\!\left(\bar{x}+\frac{\sigma u}{\sqrt{n}}\right)
=\pi(\bar{x})+\pi'(\bar{x})\frac{\sigma u}{\sqrt{n}}
+\frac{1}{2}\pi''(\bar{x})\frac{\sigma^2 u^2}{n}+R_n(u,\bar{x}),
\]
with $|R_n(u,\bar{x})|\le C|u|^3/n^{3/2}$ uniformly for $|u|\le M_n$ and $\bar{x}$ on the moderate deviation band.

Integrating term by term against $e^{-u^2/2}/\sqrt{2\pi}$ over $G_n$: the first-order term vanishes by symmetry ($\int_{G_n} u\,e^{-u^2/2}\,du=0$). For the zeroth- and second-order terms,
\begin{align*}
\frac{1}{\sqrt{2\pi}}\int_{G_n}e^{-u^2/2}\,du&=1+O(e^{-M_n^2/2}),\\
\frac{1}{\sqrt{2\pi}}\int_{G_n}u^2\,e^{-u^2/2}\,du&=1+O(M_n\,e^{-M_n^2/2}).
\end{align*}
The remainder satisfies
\[
\left|\frac{1}{\sqrt{2\pi}}\int_{G_n}R_n(u,\bar{x})\,e^{-u^2/2}\,du\right|
\le \frac{C}{n^{3/2}}\int_{\R}|u|^3\,\frac{e^{-u^2/2}}{\sqrt{2\pi}}\,du=O(n^{-3/2}).
\]
Therefore
\begin{equation}
I_G(\bar{x})=\pi(\bar{x})+\frac{\sigma^2}{2n}\pi''(\bar{x})+O(n^{-3/2})+O(e^{-M_n^2/2}),
\label{eq:good-set}
\end{equation}
uniformly on the moderate deviation band.

\textit{Step~4: Bounding the bad set.}
By the polynomial-growth condition in Assumption~\ref{ass:prior}, $|\pi(\theta)|\le C(1+|\theta|^m)$, so
\[
\left|\pi\!\left(\bar{x}+\frac{\sigma u}{\sqrt{n}}\right)\right|
\le C\bigl(1+|\bar{x}|^m+n^{-m/2}|u|^m\bigr).
\]
Therefore
\[
|I_B(\bar{x})|\le \frac{C}{\sqrt{2\pi}}\int_{|u|>M_n}(1+|u|^m)\,e^{-u^2/2}\,du
=o(n^{-K})
\]
for every fixed $K>0$, since $M_n\to\infty$ and Gaussian tails vanish faster than any polynomial rate. In particular, $I_B(\bar{x})=o(n^{-1})$ uniformly on the moderate deviation band.

\textit{Step~5: Recombination and prior localization.}
Combining Steps~3 and~4:
\[
m_a(\bar{x})=\pi(\bar{x})+\frac{\sigma^2}{2n}\pi''(\bar{x})+o(n^{-1}),
\]
uniformly for $|\bar{x}|=O(\sqrt{(\log n)/n})$.
Since $\pi(0)=c_\pi>0$ and $\pi$ is continuous,
$\pi(\bar{x})=c_\pi+O(|\bar{x}|)=c_\pi\bigl(1+O(\sqrt{(\log n)/n})\bigr)$.
Because $n^{-1}=o(\sqrt{(\log n)/n})$, the dominant error is the prior-localization term, so
\[
m_a(\bar{x})=c_\pi\bigl(1+O\!\bigl(\sqrt{(\log n)/n}\bigr)\bigr),
\]
uniformly for $|\bar{x}|=O(\sqrt{(\log n)/n})$.

\textit{Step~6: Bayes factor and threshold.}
The null likelihood is $f(\bar{x}\mid 0)$
${}=(\sqrt{n}/\sigma\sqrt{2\pi})\,e^{-t^2/2}$
with~$t=\sqrt{n}\,\bar{x}/\sigma$. Dividing:
\begin{align*}
BF_{01}(\bar{x})&=\frac{f(\bar{x}\mid 0)}{m_a(\bar{x})}
=\frac{\sqrt{n}}{\sigma\sqrt{2\pi}\,c_\pi}\,e^{-t^2/2}\\
&\quad\times\bigl(1+O\bigl(\!\sqrt{(\log n)/n}\bigr)\bigr),
\end{align*}
which is equation \eqref{eq:bf-expansion}. The expansion is deterministic and uniform in $\bar{x}$ on the moderate deviation band.

Setting $BF_{01}=\pi_a/\pi_0$ and taking logarithms:
\[
\frac{t^2}{2}=\frac{1}{2}\log n-\log(\sigma c_\pi\sqrt{2\pi})
+\log(\pi_0/\pi_a)+O\bigl(\!\sqrt{(\log n)/n}\bigr).
\]
Multiplying both sides by $2$ and expanding $-2\log(\sigma c_\pi\sqrt{2\pi})=-2\log c_\pi-\log\sigma^2-\log(2\pi)=\log(c_\pi^{-2})-\log(2\pi\sigma^2)$ gives
\[
t^2=\log n+\log(c_\pi^{-2})-\log(2\pi\sigma^2)+2\log(\pi_0/\pi_a)
+O\!\bigl(\sqrt{(\log n)/n}\bigr),
\]
which is \eqref{eq:threshold}. The threshold is Bayes-risk-optimal because it is the Bayes decision boundary: under $0$--$1$ loss, the Bayes rule rejects $H_0$ when $BF_{01}<\pi_a/\pi_0$, and this rule minimizes integrated Bayes risk among all tests.
\end{proof}

\medskip
\noindent\textit{Relation to stochastic marginal-likelihood asymptotics.}
Theorem~\ref{thm:main} is a local analytical statement: it provides a uniform expansion of the Bayes factor as a function of the observed statistic $\bar{x}$ over the moderate deviation band $|\bar{x}|=O(\sqrt{(\log n)/n})$, and this is what yields the explicit threshold in \eqref{eq:threshold}. By contrast, Appendix~\ref{app:dawid} records a Dawid-style stochastic asymptotic for log marginal-likelihood ratios under repeated sampling. The two results are consistent but serve different purposes: the appendix theorem describes typical sample-path behaviour, whereas Theorem~\ref{thm:main} gives the sharper deterministic control needed for the threshold calculation.

\subsection{Interpretation}\label{sec:interpretation}

\noindent\textit{Why the logarithmic boundary arises.}
The mechanism behind the $\sqrt{\log n}$ threshold can be understood through three complementary lenses, all leading to the same asymptotic balance.

The first is an evidence-versus-complexity argument. From \eqref{eq:bf-expansion},
$BF_{10}\propto c_\pi\exp(t^2/2)/\sqrt{n}$:
the exponential captures the likelihood gain while $1/\sqrt{n}$ is the complexity penalty from integrating over the alternative. The critical threshold is where the two balance,
$\exp(t^2/2)\sim\sqrt{n}$,
giving $t^2\sim\log n$.

The second is a risk-balancing argument. Type~I risk decays as $\exp(-c^2/2)$; type~II risk decays as~$n^{-1/2}$ from posterior concentration. Equating the two rates gives
$c^2=\log n+O(1)$,
as in \S\ref{sec:risk-min}. A constant threshold leaves type~I error undecayed; a threshold growing faster than~$\sqrt{\log n}$ lets type~II error dominate. The logarithmic rate is the unique balancing point.

The third is a BIC connection. The Laplace approximation introduces a penalty of $(d/2)\log n$ relative to the maximized likelihood; the testing threshold introduces $t^2_{\rm crit}\sim\log n$ relative to the standardized statistic. Both are manifestations of the same polynomial prefactor in the Laplace integral: Gaussian integrals evaluated at the moderate deviation scale produce polynomial rather than exponential corrections. The BIC penalty governs model selection; the $\sqrt{\log n}$ threshold governs evidence accumulation.

\medskip
\noindent\textit{Universality and prior sensitivity.}
A key feature of \eqref{eq:threshold} is the separation between universal and prior-sensitive terms. The leading term $\log n$ depends on neither the prior family nor the parametric model; it is a consequence of the $\sqrt{n}$ scaling of Fisher information in any regular model. The additive constants, by contrast, depend on the local prior density $c_\pi=\pi(\theta_0)$, the Fisher information $\sigma^{-2}$, and the prior model odds $\pi_0/\pi_a$. These constants shift the threshold but do not affect its growth rate. This universality is what allows the same asymptotic framework to encompass Jeffreys' threshold, BIC, and the Chernoff--Stein exponents simultaneously.

\medskip
\noindent\textit{Lindley's paradox.}
The divergence between the risk-optimal threshold $\sqrt{\log n}$ and the classical fixed-$\alpha$ critical value $z_{1-\alpha/2}$ grows without bound. At $n=1{,}000$ with $t=1.96$, the Bayes factor is
\[
BF_{01}=\sqrt{\tfrac{\pi\cdot 1000}{2}}\cdot e^{-1.96^2/2}\approx 39.6\times 0.147\approx 5.8,
\]
which constitutes evidence \emph{for} $H_0$. The risk-optimal threshold at $n=1{,}000$ is $t_{\mathrm{crit}}=\sqrt{\log(\pi\cdot 1000/2)}\approx 2.71$; the observed $t=1.96$ falls below this boundary. The paradox is not a pathology but a direct consequence of the logarithmic correction absent from classical calibration \citep{bartlett1957comment,lindley1957,jeffreys1961}.

\medskip
\noindent\textit{Sample-size adaptivity.}
Table~\ref{tbl:jeffreys-cutoffs} displays the risk-optimal critical values for the Cauchy prior $\theta\sim C(0,\sigma)$, giving $c_\pi=1/(\pi\sigma)$ and $t_{\mathrm{crit}}^2=\log(\pi n/2)+o(1)$. For small $n$ ($n\lesssim 50$), the Bayesian threshold falls below $1.96$, permitting rejection on weaker evidence; for large $n$, it exceeds $1.96$, requiring stronger evidence.

\begin{table}[ht]
\centering
\begin{tabular}{rcc}
\toprule
$n$ & Bayes $|t|$-cutoff $\sqrt{\log(\pi n/2)}$ & Classical $p$-value at this $|t|$ \\
\midrule
5 & 1.43 & 0.152 \\
10 & 1.66 & 0.097 \\
100 & 2.25 & 0.024 \\
1{,}000 & 2.71 & 0.007 \\
100{,}000 & 3.46 & 0.0005 \\
\bottomrule
\end{tabular}
\caption{Risk-optimal critical values from Theorem~\ref{thm:main} under a Cauchy prior with equal prior odds ($\pi_0=\pi_a=1/2$) and $\sigma=1$. The formula
$t_{\mathrm{crit}}=\sqrt{\log(\pi n/2)}$ includes the prior normalization constant $c_\pi=1/(\pi\sigma)$.}
\label{tbl:jeffreys-cutoffs}
\end{table}

\section{Extensions}\label{sec:extensions}

\subsection{Separated alternatives and error-exponent asymptotics}\label{sec:chernoff}

For simple-versus-simple testing $H_0:\theta=\theta_0$ vs.\ $H_1:\theta=\theta_1$,
\citet{Chernoff1952_MeasureAsymptoticEfficiency} established that Bayes risk converges to zero at an exponential rate independent of the prior at first order. \citet{EfronTruax1968} refined this to
\begin{equation}
P_n(\text{error})=\frac{K}{\sqrt{n}}\exp(-nD_C)\,(1+O(n^{-1})),
\label{eq:efron-truax}
\end{equation}
where $D_C=-\min_{0\le s\le 1}\log\int f_{\theta_0}(x)^{1-s}f_{\theta_1}(x)^s\,dx$ is the Chernoff information and $K>0$ depends on the prior and geometry at the Chernoff point.

\begin{remark}[Gaussian case]
For $H_0:\mu=0$ versus $H_1:\mu=\delta$ with $\sigma=1$, the Chernoff point is $\mu^*=\delta/2$, the Chernoff information is $D_C=\delta^2/8$, and
\[
P_n(\text{error})=\frac{2}{\delta\sqrt{2\pi n}}\,e^{-n\delta^2/8}.
\]
\end{remark}

More generally, \citet{EfronTruax1968} show that
\begin{equation}
P_n(\text{error})=\frac{2}{\sqrt{2\pi n}}\sqrt{\frac{V(\mu^*)}{(\mu_1-\mu_0)^2}}\,
e^{-nD_C}(1+O(n^{-1})),
\label{eq:efron-truax-general}
\end{equation}
where $V(\mu^*)$ is the variance at the Chernoff point. The Chernoff--Stein lemma gives the rate:
\[
\lim_{n\to\infty}\frac{1}{n}\log\bigl(p\,\alpha_n+(1-p)\,\beta_n\bigr)=-D_C.
\]
In composite testing, additional logarithmic terms appear; the BIC penalty $\frac{d}{2}\log n$ is a manifestation of this correction.

Classical Chernoff--Stein theory therefore captures the fixed-separated regime, where the hypotheses remain a constant Kullback--Leibler distance apart and the dominant asymptotics are exponential in $n$. The present paper studies a different regime: the alternative approaches the null with $n$, so the Bayes factor enters a moderate deviation zone in which $t_n\asymp\sqrt{\log n}$ and the resulting tail probabilities decay polynomially rather than at a fixed exponential rate. From this viewpoint, the moderate deviation boundary does not replace the error-exponent picture; it refines it in the local composite-testing regime where the mixture structure under $H_a$ becomes first-order relevant. This is also the natural point of contact with Hoeffding's large-deviation analysis of testing tradeoffs, which belongs to the same fixed-alternative lineage but leaves room for the logarithmic correction derived here once the alternative is integrated and allowed to shrink toward the null.

\medskip
\noindent\textit{One-sided Gaussian testing.}
For $H_0:\theta=\theta_0$ versus $H_1:\theta=\theta_1$ with $\theta_1>\theta_0$ and $X_i\sim N(\theta,\sigma^2)$, the Bayes decision rule with equal priors rejects $H_0$ when $\bar{X}_n>(\theta_0+\theta_1)/2$. The type~I error satisfies
\[
\alpha_n\sim\frac{2\sigma}{\Delta\sqrt{2\pi n}}
\exp\!\left(-\frac{n\Delta^2}{8\sigma^2}\right),\qquad\Delta=\theta_1-\theta_0,
\]
and by symmetry $\beta_n\sim\alpha_n$, confirming $D_C=\Delta^2/(8\sigma^2)$ with $s^*=1/2$.

\subsection{Composite alternatives and mixture interpretation}\label{sec:mixture}

Under Bayesian testing, the alternative hypothesis is represented by the mixture model
\[
m_1(\bm{x})=\int f(\bm{x}\mid\theta)\,\pi(\theta)\,\diff\theta.
\]
This makes the problem intrinsically composite even when the null is a single point. The mixture representation is not merely a formal rewriting: it is the source of the threshold inflation. Relative to the point null, the alternative spreads prior mass over a local $n^{-1/2}$ neighbourhood of $\theta_0$, and the Laplace approximation converts this spread into the $n^{-1/2}$ normalization term appearing in Theorem~\ref{thm:main}. The risk-optimal threshold marks the point at which the likelihood gain
$\exp(t^2/2)$ overcomes the complexity of integrating over the local alternative family.

This provides a composite-testing interpretation of the moderate deviation boundary. In simple-versus-simple Chernoff--Stein theory, first-order behaviour is governed by the error exponent alone. Under a mixture alternative, the polynomial prefactor induced by local integration is no longer negligible, and it is precisely this prefactor that produces the logarithmic correction
$t_{\mathrm{crit}}^2=\log n+O(1)$.
The resulting threshold may thus be read as a local, composite-alternative refinement of the classical error-exponent picture.

\subsection{The Rubin--Sethuraman risk calculus}\label{sec:rs}

Following \citet{rubin1965probabilities,RubinSethuraman1965_BRE}, suppose $\{(Y_i,Z_i)\}_{i=1}^n$ are i.i.d.\ with
\[
\begin{pmatrix}Y_i\\[2pt]Z_i\end{pmatrix}\sim\mathcal{N}\!\left(\begin{pmatrix}\theta\\[2pt]\psi\end{pmatrix},\,\sigma^2 I_{k+m}\right),
\]
where $\theta\in\mathbb{R}^k$ is the parameter of interest and $\psi\in\mathbb{R}^m$ the nuisance. With local prior$\times$loss weight
$g(\theta)=\gamma(\theta)\,\vnorm{\theta}^\lambda\,h(\theta/\vnorm{\theta})$ ($\lambda>-k$),
the RS threshold is
\begin{equation}
\|\hat\theta\|=\sqrt{\frac{\log n}{n}}\,(\lambda+k+c_n)^{1/2},\qquad c_n=o(1).
\label{eq:rs-threshold}
\end{equation}

The Bayes risk splits as $R_n=R_1+R_2$ with
\[
R_2\sim C\cdot(\log n)^{(k+\lambda)/2}\cdot n^{-(\lambda+k)/2}\cdot\int_{S^{k-1}}h(\omega)\,d\omega,
\]
while $R_1\approx C_1 n^{-a^2/(2\sigma^2)}/\sqrt{\log n}$ by Lemma~\ref{lem:uniform-md}.

\medskip
\noindent\textit{Sparse Gaussian setting.}
The $\sqrt{\log n}$ scaling has a counterpart in high-dimensional sparse inference. Under the two-groups model with sparsity $p\to 0$, \citet{datta2013asymptotic} show that the horseshoe decision rule achieves the Bayes oracle risk up to $O(1)$ when the effective threshold grows as $\sqrt{\log(1/p)}$, the sparse analogue of the RS boundary, with sparsity $1/p$ replacing sample size $n$.

\medskip
\noindent\textit{Prior tail behaviour.}
If $\pi(0)\in(0,\infty)$, then $BF_{01}\propto\sqrt{n}\,e^{-t^2/2}$ as in Theorem~\ref{thm:main}. The horseshoe prior \citep{carvalho2010horseshoe}, satisfying $\pi(\theta)\asymp\log(1+1/\theta^2)$ as $|\theta|\to 0$, modifies the asymptotics:
$\log BF_{01}\sim\frac{1}{2}\log n-\log(\log n)+C-n\bar{x}^2/(2\sigma^2)$,
giving rejection threshold $n\bar{x}^2/\sigma^2\gtrsim\log n-2\log\log n+O(1)$. This slower growth reflects the prior's heavier tails near zero, which allocate more mass to small alternatives and thereby dilute evidence more aggressively than a flat prior.

\subsection{One-parameter exponential families}\label{sec:exponential}

The logarithmic threshold scaling extends beyond Gaussian models via local asymptotic normality. Consider a one-parameter exponential family with densities
$f(x\mid\theta)=h(x)\exp\{\theta T(x)-\psi(\theta)\}$
and Fisher information $I(\theta)=\psi''(\theta)$.

\begin{assumption}[Exponential family regularity]\label{ass:expfam}
(i)~$\psi(\theta)$ is thrice differentiable with $I(\theta_0)=\psi''(\theta_0)>0$.
(ii)~$D(P_{\theta_0+h}\|P_{\theta_0})=I(\theta_0)h^2/2+O(h^3)$.
(iii)~Local asymptotic normality holds:
$\log\frac{f(X^n|\theta_0+h/\sqrt{n})}{f(X^n|\theta_0)}
=h\Delta_n-\frac{h^2 I(\theta_0)}{2}+o_\Pbb(1)$,
where $\Delta_n=\sqrt{n}(\bar{T}_n-\psi'(\theta_0))$ is the normalized score.
\end{assumption}

\begin{theorem}[Moderate deviation threshold in exponential families]\label{thm:expfam}
Under Assumptions~\ref{ass:prior} and~\ref{ass:expfam}, the Bayes factor for testing $H_0:\theta=\theta_0$ versus $H_a:\theta\neq\theta_0$ satisfies
\[
BF_{01}(\bar{T}_n)=\frac{\sqrt{nI(\theta_0)}}{\sqrt{2\pi}\,c_\pi}\,
\exp\!\left(-\frac{S_n^2}{2}\right)\!\left(1+O\!\left(\sqrt{\tfrac{\log n}{n}}\right)\right),
\]
where $S_n=\sqrt{n}\,(\bar{T}_n-\psi'(\theta_0))/\sqrt{I(\theta_0)}$ is the standardized score statistic and $c_\pi=\pi(\theta_0)>0$. The risk-optimal rejection threshold satisfies
\[
S_n^2=\log n+\log\!\bigl(c_\pi^{-2}\bigr)+\log I(\theta_0)-\log(2\pi)+2\log\frac{\pi_0}{\pi_a}+O\!\left(\sqrt{\tfrac{\log n}{n}}\right).
\]
\end{theorem}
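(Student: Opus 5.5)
We prove the result by reducing it to the Gaussian computation in the proof of Theorem~\ref{thm:main}. The reduction works through the exact exponential-family form of the likelihood ratio, and we use a Laplace expansion in place of the exact Gaussian convolution. Write $\ell_n(\theta)=n\{\theta\bar T_n-\psi(\theta)\}$, so that
\[
BF_{01}^{-1}=\int \exp\{\ell_n(\theta)-\ell_n(\theta_0)\}\,\pi(\theta)\,d\theta .
\]
This expression depends on the data only through $\bar T_n$, so the statement is deterministic in $\bar T_n$. We work uniformly on the band $|S_n|=O(\sqrt{\log n})$. On this band the MLE $\hat\theta$, defined by $\psi'(\hat\theta)=\bar T_n$, satisfies $\hat\theta-\theta_0=S_n/\sqrt{nI(\theta_0)}+O((\log n)/n)$ by inverting $\psi'$ with Assumption~\ref{ass:expfam}(i).

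\textbf{Step 1: Laplace expansion of the marginal.} Substitute $\theta=\hat\theta+u/\sqrt{n I(\hat\theta)}$. Taylor expanding $\ell_n$ about $\hat\theta$ gives
\[
\ell_n(\theta)-\ell_n(\hat\theta)=-\tfrac{u^2}{2}+O(|u|^3/\sqrt n).
\]
Split the integral into a good set $|u|\le M_n$ and a bad set, exactly as in Steps~2--4 of the proof of Theorem~\ref{thm:main}. On the good set we need $M_n^3/\sqrt n\to0$, for instance $M_n=n^{1/8}$. The bad set is controlled by strict concavity of $\ell_n$ (since $\psi''>0$) together with the polynomial growth of $\pi$, and it is $o(n^{-K})$. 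Expanding $\pi$ about $\hat\theta$ and using symmetry of the Gaussian as before yields
\[
\int e^{\ell_n(\theta)-\ell_n(\hat\theta)}\pi(\theta)\,d\theta=\sqrt{\tfrac{2\pi}{nI(\hat\theta)}}\,\pi(\hat\theta)\,\bigl(1+O(n^{-1/2})\bigr).
\]
Here the odd cubic term of $\ell_n$ integrates to zero to leading order, leaving $O(n^{-1})$ relative error. Since $|\hat\theta-\theta_0|=O(\sqrt{(\log n)/n})$ and $\pi$ and $I$ are $C^1$ near $\theta_0$, we may replace $\pi(\hat\theta)$ by $c_\pi$ and $I(\hat\theta)$ by $I(\theta_0)$ at relative cost $O(\sqrt{(\log n)/n})$.

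\textbf{Step 2: Profile likelihood ratio.} The exponential factor is $\ell_n(\hat\theta)-\ell_n(\theta_0)=nD(P_{\hat\theta}\|P_{\theta_0})$. By Assumption~\ref{ass:expfam}(ii) and a third-order expansion in $\bar T_n$,
\[
nD(P_{\hat\theta}\|P_{\theta_0})=\tfrac{S_n^2}{2}+O\bigl(n|\hat\theta-\theta_0|^3\bigr)=\tfrac{S_n^2}{2}+O\bigl((\log n)^{3/2}/\sqrt n\bigr).
\]
Combining with Step~1 gives the stated form of $BF_{01}$, after inverting. The threshold follows verbatim as in Step~6 of the proof of Theorem~\ref{thm:main}: set $BF_{01}=\pi_a/\pi_0$, take logarithms, and multiply by $2$. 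Note that $\sqrt{I(\theta_0)}$ now plays the role of $1/\sigma$, which turns $-\log\sigma^2$ into $+\log I(\theta_0)$.

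\textbf{Main obstacle.} The delicate point is the remainder. The cubic term in Step~2 is not absorbed by the $O(\sqrt{(\log n)/n})$ rate claimed. It is of order $(\log n)^{3/2}/\sqrt n$, with coefficient proportional to $\psi'''(\theta_0)$, and it is not cancelled by Gaussian symmetry because it sits in the exponent evaluated at $\hat\theta$ rather than inside an integral. To handle this, we would first try to define the statistic through the signed root $\operatorname{sgn}(\hat\theta-\theta_0)\sqrt{2nD(P_{\hat\theta}\|P_{\theta_0})}$, for which the identity is exact. Failing that, we would state the remainder as $O((\log n)^{3/2}/\sqrt n)$, which still vanishes and leaves the universal $\log n$ term and all the constants unchanged. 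We would also verify explicitly that the LAN condition~(iii) is only needed as the stochastic interpretation, since the argument above is purely analytic in $\bar T_n$.
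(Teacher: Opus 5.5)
Your argument is correct and takes a different route from the paper. Your diagnosis of the remainder is also right: the defect lies in the stated rate, not in your proof.

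\textbf{Where the paper's proof loses the cubic term.} The paper expands at $\theta_0$ rather than at $\hat\theta$. It substitutes $\theta=\theta_0+u/\sqrt n$ and replaces the log-likelihood ratio by the LAN quadratic $u\Delta_n-u^2I(\theta_0)/2$, silently dropping the $o_{\Pbb}(1)$ term of Assumption~\ref{ass:expfam}(iii). It then completes the square and attaches a factor $1+O(\sqrt{(\log n)/n})$ that accounts only for localizing $\pi(\theta_0+u/\sqrt n)$ to $c_\pi$. In an exponential family the dropped term is deterministic:
\[
-n\Bigl\{\psi\bigl(\theta_0+\tfrac{u}{\sqrt n}\bigr)-\psi(\theta_0)-\psi'(\theta_0)\tfrac{u}{\sqrt n}\Bigr\}+\tfrac{u^2I(\theta_0)}{2}=-\frac{\psi'''(\theta_0)\,u^3}{6\sqrt n}+O\Bigl(\frac{u^4}{n}\Bigr).
\]
The completed-square Gaussian is centred at $u\approx S_n/\sqrt{I(\theta_0)}\asymp\sqrt{\log n}$. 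So this term contributes exactly your $-\psi'''(\theta_0)S_n^3/\{6I(\theta_0)^{3/2}\sqrt n\}$ to the exponent. Whenever $\psi'''(\theta_0)\neq0$, that is of order $(\log n)^{3/2}/\sqrt n$ on the band and at the threshold, not $O(\sqrt{(\log n)/n})$. A Poisson check with $\theta_0=0$ gives $nD=S_n^2/2-S_n^3/(6\sqrt n)+O(S_n^4/n)$. The $O(h^3)$ in Assumption~\ref{ass:expfam}(ii) already caps what the hypotheses can deliver at this order. Moreover, a fixed-$h$ LAN remainder could not give uniform control over $|u|\lesssim\sqrt{\log n}$ in any case.

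\textbf{Consequences and your repairs.} With $S_n$, both displays hold only with remainder $O((\log n)^{3/2}/\sqrt n)$, unless $\psi'''(\theta_0)=0$ as in the Gaussian setting of Theorem~\ref{thm:main}. Since the cubic term is odd, the upper and lower rejection boundaries then differ at that order. Both of your repairs are sound.
\begin{itemize}
\item With the signed root $r_n$, the identity $\ell_n(\hat\theta)-\ell_n(\theta_0)=r_n^2/2$ is exact. The only remaining error is replacing $\pi(\hat\theta)/\sqrt{I(\hat\theta)}$ by $c_\pi/\sqrt{I(\theta_0)}$, which costs $O(\sqrt{(\log n)/n})$ and recovers the stated rate.
\item Keeping $S_n$ with the weaker remainder leaves the $\log n$ term and all constants unchanged.
\end{itemize}

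\textbf{What each route buys.} The paper's completion of the square is shorter and shows the constants immediately. It is phrased for LAN models generally, but as written it is a heuristic with an unjustified remainder. Your Laplace expansion about the MLE (good/bad split with $M_n^3/\sqrt n\to0$, concavity of $\ell_n$ for the tail) is deterministic and uniform in $\bar T_n$. It yields the correct remainder and makes Assumption~\ref{ass:expfam}(iii) superfluous. The price is that it relies on the exact exponential-family form, so it does not by itself extend to general LAN models.

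One cosmetic inconsistency: your Step~1 display states $1+O(n^{-1/2})$ while the text says $O(n^{-1})$. Either suffices here.
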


\begin{proof}
\textit{Step~1: Local asymptotic normality.}
By Assumption~\ref{ass:expfam}(iii), the log-likelihood ratio under $H_a$ is
\[
\log\frac{f(X^n|\theta_0+h/\sqrt{n})}{f(X^n|\theta_0)}=h\Delta_n-\frac{h^2 I(\theta_0)}{2}+o_\Pbb(1),
\]
where $\Delta_n=\sqrt{n}(\bar{T}_n-\psi'(\theta_0))$ is the normalized score and $I(\theta_0)=\psi''(\theta_0)$ is the Fisher information.

\textit{Step~2: Marginal likelihood integral under LAN.}
With substitution $\theta=\theta_0+u/\sqrt{n}$, the marginal likelihood under $H_a$ is
\[
m_a(X^n)=f(X^n|\theta_0)\int\exp\left(u\Delta_n-\frac{u^2 I(\theta_0)}{2}\right)\pi(\theta_0+u/\sqrt{n})\,du.
\]

\textit{Step~3: Localization and completion.}
By Assumption~\ref{ass:prior}, $\pi(\theta_0+u/\sqrt{n})=c_\pi(1+o(1))$ in the $u$-region where the exponential and prior are nonnegligible. Completing the square in the exponent:
\[
u\Delta_n-\frac{I(\theta_0)\,u^2}{2}
=-\frac{I(\theta_0)}{2}\!\left(u-\frac{\Delta_n}{I(\theta_0)}\right)^{\!2}
+\frac{\Delta_n^2}{2I(\theta_0)}.
\]
Extracting the constant $\exp(\Delta_n^2/(2I(\theta_0)))$ and evaluating the remaining Gaussian integral $\int\exp(-I(\theta_0)(u-\Delta_n/I(\theta_0))^2/2)\,du=\sqrt{2\pi/I(\theta_0)}$, while accounting for the substitution $d\theta=n^{-1/2}\,du$, gives
\[
m_a(X^n)=f(X^n|\theta_0)\cdot\frac{c_\pi\sqrt{2\pi}}{\sqrt{nI(\theta_0)}}\cdot
\exp\!\left(\frac{\Delta_n^2}{2I(\theta_0)}\right)\!\left(1+O\!\left(\sqrt{\tfrac{\log n}{n}}\right)\right).
\]
Since $S_n=\Delta_n/\sqrt{I(\theta_0)}$, we have $\Delta_n^2/(2I(\theta_0))=S_n^2/2$.

\textit{Step~4: Bayes factor and threshold.}
Therefore
\[
BF_{01}=\frac{\sqrt{nI(\theta_0)}}{\sqrt{2\pi}\,c_\pi}\,
\exp\!\left(-\frac{S_n^2}{2}\right)\!\left(1+O\!\left(\sqrt{\tfrac{\log n}{n}}\right)\right).
\]
Setting $BF_{01}=\pi_a/\pi_0$ and taking logarithms:
\[
\frac{S_n^2}{2}=\frac{1}{2}\log n-\log(c_\pi\sqrt{2\pi/I(\theta_0)})+\log\frac{\pi_0}{\pi_a}+O\!\left(\sqrt{\tfrac{\log n}{n}}\right),
\]
yielding the explicit threshold
\[
S_n^2=\log n+\log\!\bigl(c_\pi^{-2}\bigr)+\log I(\theta_0)-\log(2\pi)+2\log\frac{\pi_0}{\pi_a}+O\!\left(\sqrt{\tfrac{\log n}{n}}\right),
\]
which mirrors \eqref{eq:threshold} with $\sigma^{-2}$ replaced by $I(\theta_0)$.
\end{proof}

\section{Comparison with alternative calibrations}\label{sec:comparison}

We compare three calibration methods algebraically.

\subsection{Fixed-$\alpha$ testing}

The Neyman--Pearson approach fixes $\alpha=0.05$ regardless of $n$: the critical value $z_{1-\alpha/2}=1.96$ is independent of sample size. Under integrated Bayes risk with fixed prior weights, the fixed-$\alpha$ test is asymptotically suboptimal. The risk-optimal procedure achieves $B_n^\star=\pi_0\alpha_n^\star+\pi_a\beta_n^\star$ with $\alpha_n^\star\to 0$ as $n^{-a^2/(2\sigma^2)}$ (Lemma~\ref{lem:uniform-md}), yielding strictly lower integrated risk for all sufficiently large $n$.

\subsection{E-value thresholds}

E-values \citep{shafer2021testing,grunwald2020safe} are nonneg\-ative random variables $E_n$ satisfying $\Ebb_{\theta_0}[E_n]\le 1$. Safe testing rejects when $E_n>1/\alpha$, giving
\[
c_{\mathrm{EV}}=\sqrt{2\log(1/\alpha)},\qquad c_{\mathrm{RS}}=\sqrt{\log n+O(1)}.
\]
The ratio $c_{\mathrm{EV}}/c_{\mathrm{RS}}=\sqrt{2\log(1/\alpha)/(\log n+O(1))}\to 0$ as $n\to\infty$.
The e-value threshold is $O(1)$ in $n$; it does not grow with sample size. Safe testing controls type~I error uniformly over stopping times via Ville's inequality but does not minimize integrated Bayes risk. \citet{PolsonSokolovZantedeschi2026_Evalues} develop a typed framework separating sequential evidence into representation, validity, and decision layers, clarifying when the e-value and Bayes risk calibrations coincide and when they diverge.

\medskip
\noindent\textit{Summary.}
\begin{align*}
\text{Risk-optimal (RS):} &\quad t_{\mathrm{crit}}=\sqrt{\log n+O(1)}\quad\text{(grows with }n\text{)},\\
\text{Fixed-}\alpha\text{ (NP):} &\quad t_{\mathrm{crit}}=z_{1-\alpha/2}=O(1)\quad\text{(constant in }n\text{)},\\
\text{E-value (safe):} &\quad t_{\mathrm{crit}}=\sqrt{2\log(1/\alpha)}=O(1)\quad\text{(constant in }n\text{)}.
\end{align*}
Under integrated Bayes risk with fixed prior weights and fixed non-adaptive thresholds, only the risk-optimal calibration operates on the moderate deviation scale. The divergence between this threshold and the fixed-$\alpha$ critical value as $n\to\infty$ is the mechanism underlying the Lindley paradox.

\medskip
\noindent\textit{Scope of the comparison.}
The comparison above assumes fixed (non-adaptive) rejection thresholds and evaluates asymptotic scaling in $n$. Alternative calibration strategies that adapt thresholds to the data, such as empirical Bayes or posterior predictive methods, could in principle produce different scaling behaviour, though the integrated risk criterion would still select the moderate deviation boundary under the stated regularity conditions. The comparison is algebraic, not normative: each calibration framework serves a different inferential goal (risk minimization, type~I control, anytime validity), and the scaling differences reflect these distinct objectives.

\section{Connections to the literature}\label{sec:connections}

The moderate deviation boundary \eqref{eq:scales} appears, in various guises, across several lines of work. We organize the connections into four strands, each arriving at the same asymptotic regime from a different starting point.

\medskip
\noindent\textit{Jeffreys and the Lindley paradox.}
\citet{Jeffreys1935} first observed that the Bayes factor threshold for point-null testing grows with sample size, noting that evidence thresholds expressed in terms of the standardized statistic should increase as data accumulate. \citet{lindley1957} sharpened this into a paradox: for any fixed significance level $\alpha$, there exists a sample size $n^*$ beyond which a $p$-value of $\alpha$ corresponds to a Bayes factor favouring $H_0$. The resolution, as formalized in Theorem~\ref{thm:main}, is that the risk-optimal threshold grows as $\sqrt{\log n}$, so the classical fixed-$\alpha$ critical value eventually falls below the Bayesian decision boundary. The Lindley paradox is thus a direct consequence of the logarithmic correction, not a deficiency of either framework.

\medskip
\noindent\textit{Rubin--Sethuraman moderate deviations.}
\citet{rubin1965probabilities,RubinSethuraman1965_BRE} developed the systematic moderate deviation analysis of Bayes risk, establishing the $\sqrt{(\log n)/n}$ scale for rejection boundaries under integrated risk with general prior structures. Their analysis identified the polynomial decay of tail probabilities at this scale and showed that the Bayes risk is minimized at the moderate deviation boundary. The present paper states their programme under explicit regularity conditions and derives the sharp threshold constant.

\medskip
\noindent\textit{Laplace approximation and BIC asymptotics.}
The BIC penalty $(d/2)\log n$ arises from the Laplace approximation to the marginal likelihood, in which the Gaussian integral over the posterior produces a prefactor of order $n^{-d/2}$. This is the same polynomial correction that appears in the testing threshold: the BIC penalty governs the additive log-marginal cost of model complexity, while the $\sqrt{\log n}$ threshold governs the critical statistic for evidence accumulation. \citet{dawid1984statistical} established prequential asymptotics for marginal-likelihood ratios under repeated sampling, while \citet{Clarke1990} refined Laplace expansions to identify the prior-density and Fisher-information constants entering the BIC correction. Theorem~\ref{thm:main} may be read as a local threshold-level counterpart of these results: instead of a stochastic expansion for the log evidence along random sample paths, it provides a deterministic uniform expansion of the Bayes factor over the moderate deviation region where the decision boundary is formed.

\medskip
\noindent\textit{Information-theoretic error exponents.}
The Chernoff--Stein theory \citep{Chernoff1952_MeasureAsymptoticEfficiency} governs the exponential rate $\exp(-nD_C)$ in simple-versus-simple testing, where the threshold is prior-insensitive at first order. The Efron--Truax refinement \citep{EfronTruax1968} supplies the polynomial prefactor $K/\sqrt{n}$ that becomes relevant at the moderate deviation scale. The Bahadur--Rao theorem \citep{bahadur1960deviations} provides the analogous refinement for tail probabilities of sample means. Hoeffding's large-deviation analysis of hypothesis testing belongs to the same lineage, emphasizing the full error-exponent tradeoff under separated alternatives. In composite testing, however, the integration over the alternative introduces the additional logarithmic correction, connecting the Chernoff--Stein exponent and Hoeffding-type error exponents to the BIC penalty and the $\sqrt{\log n}$ threshold.

\medskip
\noindent\textit{Calibration and reconciliation.}
\citet{sellke2001calibration} showed that a $p$-value of $0.05$ corresponds to posterior probabilities of $H_0$ no smaller than $0.22$--$0.41$, depending on the prior class. \citet{LopesPolson2019_Redux} provide modern calibrations linking $p$-values, Bayes factors, and posterior odds. \citet{datta2013asymptotic} extended the RS programme to global--local priors in high-dimensional settings.

Collectively, these strands converge on the moderate deviation scale as the unifying asymptotic lens. The present paper provides a common asymptotic coordinate system, namely the three equivalent scales \eqref{eq:scales}, on which these classical results can be simultaneously situated.

\section{Discussion}\label{sec:disc}

\noindent\textit{Scope and limitations.}
We apply existing moderate deviation results (Bahadur--Rao, Cram\'er) to the Bayes risk optimization problem; no new moderate deviation theorem is proved. The analysis rests on three structural assumptions: regular parametric models admitting a Laplace approximation to the marginal likelihood; smooth priors with $\pi(\theta_0)\in(0,\infty)$; and symmetric $0$--$1$ loss. Each of these could be relaxed, potentially changing the threshold law.

Priors with $\pi(\theta_0)=0$ (e.g., priors excluding a neighbourhood of the null) would weaken the type~II risk component and could shift the threshold. Priors with $\pi(\theta_0)=\infty$ introduce logarithmic corrections, as illustrated by the horseshoe example in \S\ref{sec:rs}, where the threshold is modified to $\log n-2\log\log n+O(1)$. Non-symmetric loss functions $L_0\neq L_1$ would change the constants in \eqref{eq:threshold} but not the $\log n$ scaling, provided the loss ratio remains fixed. Singular models (where the Fisher information matrix is degenerate or the parameter space has boundary) may require entirely different asymptotic tools and could produce non-logarithmic threshold laws.

We do not address composite priors with mass away from $\theta_0$ (the ``well-separated'' regime, where large deviation theory governs), nor do we treat sequential testing designs where the sample size is itself a random variable.

\medskip
\noindent\textit{Summary.}
The risk-optimal testing threshold for composite Bayesian testing operates at the moderate deviation scale $\lambda_n\asymp\sqrt{(\log n)/n}$, yielding critical values $t_{\mathrm{crit}}\asymp\sqrt{\log n}$ with
\[
t_{\mathrm{crit}}^2=\log n+O(1).
\]
The leading term $\log n$ is universal across regular parametric models and priors satisfying $\pi(\theta_0)\in(0,\infty)$; the additive constants are determined by the local prior density, Fisher information, and prior model odds. This boundary provides a unified explanation for the Lindley paradox, Jeffreys' testing threshold, the BIC penalty, and the Chernoff--Stein error exponents; phenomena that have appeared separately in the literature but share a common asymptotic origin in the polynomial tail decay at the moderate deviation scale.

\medskip
\noindent\textit{Open directions.}
Several extensions merit investigation. Sequential testing designs, where data arrive over time and the rejection boundary may depend on the stopping rule, present a natural generalization; the connection to e-values and anytime-valid inference \citep{shafer2021testing,grunwald2020safe} suggests that the moderate deviation boundary may have a sequential counterpart. Non-exchangeable settings, where the i.i.d.\ assumption is relaxed, require moderate deviation principles for dependent data; see \citet{dinwoodie1992large} for initial results on exchangeable random variables. The moderate deviation calibration principle extends beyond parametric point-null testing: \citet{PolsonSokolovZantedeschi2026_GOF} show that it governs Bayes-risk optimal thresholds for goodness-of-fit tests, where the same $\sqrt{\log n}$ inflation arises without reliance on Bayes factors or likelihood-ratio structure. The interaction between sparsity and the testing threshold in high-dimensional settings, partially addressed through the horseshoe example in \S\ref{sec:rs}, deserves a more systematic treatment. Finally, the connection to optimal sample size selection \citep{LynchSethuraman1970}, where the question is reversed from ``what threshold given $n$?'' to ``what $n$ given a target evidence level?'', provides a natural decision-theoretic complement to the results developed here. Each of these directions would test, and potentially extend, the central thesis: that moderate deviations provide the natural asymptotic bridge between fixed-threshold testing and fully separated error-exponent theory, and that the $\sqrt{\log n}$ scale is the canonical meeting point.

\appendix

\section{Moderate deviation calculations}\label{app:md}

Cram\'er showed that if the MGF exists in a neighbourhood of the origin:
\[
\frac{1}{n}\log\Pbb(\bar{X}_n>\lambda)\to -I(\lambda),\qquad
I(\lambda)=\sup_{t\in\R}\{t\lambda-\psi(t)\}.
\]
The Bahadur--Rao refinement supplies the polynomial prefactor:
\[
P(|\bar{X}_n|>\lambda_n)\sim
\frac{2\sigma}{\sqrt{2\pi n}\,\lambda_n}\exp\!\left(-\frac{n\lambda_n^2}{2\sigma^2}\right),
\quad\lambda_n\to 0,\ n\lambda_n^2\to\infty.
\]
Substituting $\lambda_n=a\sqrt{(\log n)/n}$:
\[
\Pbb\!\left(|\bar{X}_n|>a\sqrt{\frac{\log n}{n}}\right)\sim
\frac{\sqrt{2}\,\sigma}{a\sqrt{\pi\log n}}\,n^{-a^2/(2\sigma^2)}.
\]

\noindent\textit{Cram\'er's rate function.}
For $\text{Bernoulli}(p_0)$ data, the rate function is the binary KL divergence:
$I(a)=a\log\frac{a}{p_0}+(1-a)\log\frac{1-a}{1-p_0}$.
For $X\sim\text{Bin}(n,p_0)$, the Chernoff bound gives
\[
\alpha=P(X\ge c\mid H_0)\le\inf_{t\ge 0}e^{-tc}\bigl((1-p_0)+p_0\,e^t\bigr)^n,
\]
confirming that the Chernoff and Cram\'er rates coincide for i.i.d.\ data.

\section{Bayes risk computation for the normal model}\label{sec:lindley-risk}

The following derivation follows \citet{SahuSmith2006}. Assume
$X\mid\theta\sim N(\theta,\sigma^2)$ with prior $\pi^{(n)}(\theta)=N(\mu_t,\tau_t^2)$
and scoring prior $\pi^{(s)}(\theta)=N(\mu_s,\tau_s^2)$. The posterior mean and variance are
\[
E(\theta\mid\bar{x}_n)=\lambda_t^2\left(\frac{n\bar{x}_n}{\sigma^2}+\frac{\mu_t}{\tau_t^2}\right),
\qquad
\mathrm{var}(\theta\mid\bar{x}_n)=\lambda_t^2=\frac{1}{n/\sigma^2+1/\tau_t^2}.
\]
Using bivariate normal identities, the risk function reduces to
\[
r(\pi^{(s)},\delta^{\pi^{(n)}}_n)=L_0\,\P\{U>a,V<b\}+L_1\,\P\{U<a,V>b\},
\]
with $(U,V)$ bivariate normal, correlation $\rho=(1+\sigma^2/(n\tau_s^2))^{-1/2}$,
$a=(\theta_0-\mu_s)/\tau_s$, $b=\rho\,\{k^{\pi^{(n)}}(n)-\mu_s\}/\tau_s$.

\medskip\noindent\textit{Lindley's derivation of the $\log n$ threshold.}\label{sec:lindley}
\citet{lindley1957} considers the Wald-test setting. The null hypothesis is accepted if and only if
\begin{equation}
Z^2<A(\phi_0)+\log n,
\label{eq:3.12}
\end{equation}
where $A(\phi_0)$ is finite. By Mills' ratio, the significance level satisfies
\begin{equation}
\alpha_n=2\{1-\Phi(\sqrt{A(\phi_0)+\log n})\}\sim\sqrt{\frac{2}{\pi}}\,e^{-A(\phi_0)/2}\,\frac{1}{\sqrt{n\log n}}.
\label{eq:3.13}
\end{equation}
The decreasing $\alpha_n$ represents the risk-optimal calibration identified in Theorem~\ref{thm:main}.

\section{Dawid's theorem on posterior model probabilities}\label{app:dawid}

Let $M=\{P_\omega:\omega\in\Omega\subset\R^d\}$ be a parametric model with prior $p(\omega)$,
let $Q$ be the true data-generating distribution, and let
$\vartheta(\omega)=p(\omega)/\{\det I(\omega)\}^{1/2}$.

\begin{theorem}[Dawid]\label{thm:dawid}
Under standard regularity:
\begin{enumerate}
\item[\emph{(i)}] If $Q\notin M$, then
$n^{-1}\log\{p_n(X^n)/q_n(X^n)\}\xrightarrow{a.s.}-K(Q,M)$,
where $K(Q,M)=\min_\omega D(Q\|P_\omega)$.
\item[\emph{(ii)}] If $Q=P_{\omega^*}$, then
\begin{multline*}
\log\frac{p_n(X^n)}{q_n(X^n)}=-\frac{d}{2}\log\!\left(\frac{n}{2\pi}\right)\\
+\log\vartheta(\omega^*)+\tfrac{1}{2}\chi^2_d+o_p(1).
\end{multline*}
\end{enumerate}
\end{theorem}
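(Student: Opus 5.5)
Part (i) follows from the strong law of large numbers combined with exponential bounds on the marginal likelihood; part (ii) is a stochastic Laplace expansion around the MLE. Throughout, \emph{standard regularity} means an identifiable model, smooth densities with a nondegenerate Fisher information $I(\omega)$, a continuous prior positive at the relevant point, and uniform-in-$\omega$ laws of large numbers for the log-likelihood and its derivatives (Wald/Cram\'er conditions).

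For (i), write $p_n(X^n)=\int\prod_i f(X_i\mid\omega)\,p(\omega)\,d\omega$ and set $\omega^\dagger=\arg\min_\omega D(Q\|P_\omega)$.

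\emph{Upper bound.} Bound the integrand on a compact set by its supremum. Wald's uniform SLLN then gives
\[
n^{-1}\sup_\omega\log\{f(X^n\mid\omega)/q_n(X^n)\}\to-K(Q,M)\quad\text{a.s.}
\]
Integrability and a uniform exclusion outside compacts handle the tails.

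\emph{Lower bound.} Restrict the integral to a neighbourhood $U_\varepsilon$ of $\omega^\dagger$. By continuity of $\omega\mapsto\Ebb_Q\log f(X\mid\omega)$, the integrand there is at least $\exp\{-n(K+\varepsilon)\}$ eventually a.s., and $p(U_\varepsilon)>0$. Since $\varepsilon$ is arbitrary, the two bounds give the almost-sure limit.

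For (ii), take $Q=P_{\omega^*}$, so that $q_n=f(X^n\mid\omega^*)$. The MLE $\hat\omega$ is consistent and $\sqrt n$-normal. Taylor-expand $\ell_n(\omega)=\log f(X^n\mid\omega)$ about $\hat\omega$:
\[
\ell_n(\omega)=\ell_n(\hat\omega)-\tfrac n2(\omega-\hat\omega)^\top \hat J_n(\omega-\hat\omega)+\text{cubic remainder},
\]
where $\hat J_n\to I(\omega^*)$ in probability. Then apply the good-set/bad-set argument of Theorem~\ref{thm:main}.

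\emph{Good set.} On $\{\|\omega-\hat\omega\|\le M_n/\sqrt n\}$ with $M_n\to\infty$ slowly, the cubic remainder is $o_p(1)$ uniformly. This yields
\[
\int p\,e^{\ell_n}=e^{\ell_n(\hat\omega)}\,p(\hat\omega)\,(2\pi/n)^{d/2}\det{I(\omega^*)}^{-1/2}\,(1+o_p(1)).
\]

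\emph{Bad set.} The complement contributes a relatively negligible amount. Near $\hat\omega$ this follows from local concavity of $\ell_n$; far from $\hat\omega$ it follows from the separation condition $\sup_{\|\omega-\omega^*\|>\delta}n^{-1}(\ell_n(\omega)-\ell_n(\omega^*))<-\eta$, which holds eventually.

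Taking logs, and using $p(\hat\omega)\to p(\omega^*)$, we get
\[
\log\frac{p_n}{q_n}=-\frac d2\log\frac n{2\pi}+\log\vartheta(\omega^*)+\{\ell_n(\hat\omega)-\ell_n(\omega^*)\}+o_p(1).
\]
Wilks' theorem gives $2\{\ell_n(\hat\omega)-\ell_n(\omega^*)\}\Rightarrow\chi^2_d$. This produces the $\tfrac12\chi^2_d$ term, read as a distributional identity up to $o_p(1)$.

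The main obstacle is the bad-set control in (ii). Unlike Theorem~\ref{thm:main}, the exponent is random and not exactly quadratic, so a uniform $o_p(1)$ bound on the cubic remainder over a shrinking ball must be combined with a global separation argument. I would first try the classical route: use a uniform SLLN for $\nabla^2\ell_n/n$ on a fixed ball to get local strict concavity, and Wald's identifiability-based bound outside that ball. The rest is routine Gaussian integration.
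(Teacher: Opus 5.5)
Your part (ii) follows the paper's route. The paper splits $\log(p_n/q_n)=A+B+C$ into a Laplace term about $\hat\omega$, the Wilks term $B=\ell_n(\hat\omega)-\ell_n(\omega^*)$, and $C=0$ under correct specification. You add the good-set/bad-set control (local concavity near $\hat\omega$, separation away from $\omega^*$) that the paper delegates to Tierney--Kadane.

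Part (i) is where you genuinely differ. The paper reuses the same three-term decomposition, with $\omega^*$ read as the KL projection. It argues that $C=-nK(Q,M)+O_p(\sqrt n)$ dominates $A$ and $B$, which are implicitly $O_p(\log n)$ and $O_p(1)$. That unified route buys a rate, $n^{-1}\log(p_n/q_n)=-K+O_p(n^{-1/2})$. As written, though, it proves only convergence in probability. The almost-sure statement needs the SLLN for $C/n$ plus almost-sure control of $A/n$ and $B/n$. Moreover, under misspecification the Laplace term carries $-\Ebb_Q\nabla^2\log f(X\mid\omega^\dagger)$ rather than the Fisher information, and $B$ is no longer $\tfrac12\chi^2_d$. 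Both are harmless at order $n$, but the paper does not address them.

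Your sandwich bounds $p_n/q_n\le\sup_\omega f(X^n\mid\omega)/q_n(X^n)$ from above via Wald's uniform SLLN. From below, you restrict to a neighbourhood of $\omega^\dagger$ with positive prior mass. This needs no MLE, no Laplace expansion, and no second moments for a CLT-type rate, and it gives the almost-sure limit directly. The cost is that it says nothing about the rate.

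One tightening: in the lower bound, continuity of $\omega\mapsto\Ebb_Q\log f(X\mid\omega)$ does not by itself bound the integrand uniformly over $U_\varepsilon$. You can either invoke your uniform SLLN on $\bar U_\varepsilon$, or apply Jensen to the prior renormalized on $U_\varepsilon$. The Jensen route reduces the claim to the ordinary SLLN for $\int_{U_\varepsilon}\log\{f(X_i\mid\omega)/q(X_i)\}\,\tilde p(d\omega)$ together with continuity of $\omega\mapsto D(Q\|P_\omega)$ at $\omega^\dagger$.
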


\begin{remark}[Relation to Theorem~\ref{thm:main}]
Theorem~\ref{thm:dawid} is stochastic in nature: it describes the asymptotic behaviour of the log marginal-likelihood ratio under repeated sampling, either through an almost sure limit under misspecification or through a BIC-type expansion under correct specification. This differs from Theorem~\ref{thm:main}, which is deterministic and local in the observed statistic $\bar{x}$, and is uniform over the moderate deviation band relevant for the testing boundary. Accordingly, Theorem~\ref{thm:dawid} supports the same asymptotic picture but does not by itself yield the explicit threshold constant in \eqref{eq:threshold}. It is more general in stochastic scope, but less sharp for the local threshold calculation carried out in Theorem~\ref{thm:main}.
\end{remark}

This result is included to situate the present threshold calculation within the broader asymptotic theory of marginal-likelihood ratios; unlike Theorem~\ref{thm:main}, it is not a uniform expansion in the observed test statistic.

\begin{proof}[Proof sketch]
Decompose $\log(p_n/q_n)=A+B+C$ where
\begin{align*}
A &= \log\frac{p_n(X^n)}{p_n(X^n\mid\hat\omega)},\quad
B = \log\frac{p_n(X^n\mid\hat\omega)}{p_n(X^n\mid\omega^*)},\quad
C = \log\frac{p_n(X^n\mid\omega^*)}{q_n(X^n)}.
\end{align*}
By Laplace expansion \citep{Tierney1986}:
\[
A=-\frac{d}{2}\log\!\left(\frac{n}{2\pi}\right)+\log\frac{p(\omega^*)}{\bigl\{\det I(\omega^*)\bigr\}^{1/2}}+O_p(n^{-1/2}).
\]
By Taylor expansion, $B\xrightarrow{D}\frac{1}{2}\chi^2_d$ (Wilks' theorem). Under $Q$,
$C=-nK(Q,M)+O_p(\sqrt{n})$.

\noindent\textit{Case~(i):} $K(Q,M)>0$ dominates, yielding the almost sure limit.
\textit{Case~(ii):} $K(Q,M)=0$ and $C=0$; combining $A$ and $B$ yields the result \citep{Clarke1990}.
\end{proof}

\bibliographystyle{plainnat}
\bibliography{EJS_revised_testing}

\begin{thebibliography}{26}
\providecommand{\natexlab}[1]{#1}
\providecommand{\url}[1]{\texttt{#1}}
\expandafter\ifx\csname urlstyle\endcsname\relax
  \providecommand{\doi}[1]{doi: #1}\else
  \providecommand{\doi}{doi: \begingroup \urlstyle{rm}\Url}\fi

\bibitem[Bahadur and Rao(1960)]{bahadur1960deviations}
Raghu~Raj Bahadur and R.~Ranga Rao.
\newblock On deviations of the sample mean.
\newblock \emph{Annals of Mathematical Statistics}, 31\penalty0 (4):\penalty0
  1015--1027, 1960.

\bibitem[Bartlett(1957)]{bartlett1957comment}
M.~S. Bartlett.
\newblock {A comment on {D}.~{V}.~{Lindley}'s statistical paradox}.
\newblock \emph{Biometrika}, 44\penalty0 (3/4):\penalty0 533--534, 1957.

\bibitem[Berger(2003)]{berger2003could}
James~O. Berger.
\newblock Could {F}isher, {J}effreys and {N}eyman have agreed on testing?
\newblock \emph{Statistical Science}, 18\penalty0 (1):\penalty0 1--32, 2003.
\newblock \doi{10.1214/ss/1056397485}.

\bibitem[Carvalho et~al.(2010)Carvalho, Polson, and
  Scott]{carvalho2010horseshoe}
Carlos~M. Carvalho, Nicholas~G. Polson, and James~G. Scott.
\newblock The horseshoe estimator for sparse signals.
\newblock \emph{Biometrika}, 97\penalty0 (2):\penalty0 465--480, 2010.

\bibitem[Chernoff(1952)]{Chernoff1952_MeasureAsymptoticEfficiency}
Herman Chernoff.
\newblock A measure of asymptotic efficiency for tests of a hypothesis based on
  the sum of observations.
\newblock \emph{Annals of Mathematical Statistics}, 23\penalty0 (4):\penalty0
  493--507, 1952.
\newblock \doi{10.1214/aoms/1177729330}.

\bibitem[Clarke and Barron(1990)]{Clarke1990}
Bertrand~S. Clarke and Andrew~R. Barron.
\newblock Information-theoretic asymptotics of {B}ayes methods.
\newblock \emph{{IEEE} Transactions on Information Theory}, 36\penalty0
  (3):\penalty0 453--471, 1990.

\bibitem[Daniels(1954)]{daniels1954saddlepoint}
Henry~E. Daniels.
\newblock Saddlepoint approximations in statistics.
\newblock \emph{Annals of Mathematical Statistics}, 25\penalty0 (4):\penalty0
  631--650, 1954.

\bibitem[Datta and Ghosh(2013)]{datta2013asymptotic}
Jyotishka Datta and Jayanta~K. Ghosh.
\newblock Asymptotic properties of {B}ayes risk for the horseshoe prior.
\newblock \emph{Bayesian Analysis}, 8\penalty0 (1):\penalty0 111--132, 2013.
\newblock \doi{10.1214/13-BA805}.

\bibitem[Dawid(1984)]{dawid1984statistical}
A.~P. Dawid.
\newblock {Present position and potential developments: Some personal views:
  {S}tatistical theory: The prequential approach}.
\newblock \emph{Journal of the Royal Statistical Society: Series A},
  147\penalty0 (2):\penalty0 278--292, 1984.

\bibitem[Dinwoodie and Zabell(1992)]{dinwoodie1992large}
Ian~H. Dinwoodie and Sandy~L. Zabell.
\newblock Large deviations for exchangeable random vectors.
\newblock \emph{Annals of Probability}, 20\penalty0 (3):\penalty0 1147--1166,
  1992.
\newblock \doi{10.1214/aop/1176989683}.

\bibitem[Efron and Truax(1968)]{EfronTruax1968}
Bradley Efron and Donald Truax.
\newblock Large deviations theory in exponential families.
\newblock \emph{Annals of Mathematical Statistics}, 39\penalty0 (5):\penalty0
  1402--1424, 1968.

\bibitem[Eichelsbacher and Ganesh(2002)]{eichelsbacher2002moderate}
Peter Eichelsbacher and Ayalvadi~J. Ganesh.
\newblock Moderate deviations for {B}ayes posteriors.
\newblock \emph{Scandinavian Journal of Statistics}, 29\penalty0 (1):\penalty0
  153--167, 2002.

\bibitem[Gr{\"u}nwald et~al.(2024)Gr{\"u}nwald, de~Heide, and
  Koolen]{grunwald2020safe}
Peter Gr{\"u}nwald, Rianne de~Heide, and Wouter~M. Koolen.
\newblock Safe testing.
\newblock \emph{Journal of the Royal Statistical Society: Series B (Statistical
  Methodology)}, 86\penalty0 (5):\penalty0 1091--1128, 2024.
\newblock \doi{10.1093/jrsssb/qkae011}.

\bibitem[Jeffreys(1935)]{Jeffreys1935}
Harold Jeffreys.
\newblock Some tests of significance, treated by the theory of probability.
\newblock \emph{Proceedings of the Cambridge Philosophical Society},
  31:\penalty0 203--222, 1935.

\bibitem[Jeffreys(1961)]{jeffreys1961}
Harold Jeffreys.
\newblock \emph{Theory of Probability}.
\newblock Oxford University Press, Oxford, 3rd edition, 1961.

\bibitem[Lindley(1957)]{lindley1957}
D.~V. Lindley.
\newblock A statistical paradox.
\newblock \emph{Biometrika}, 44\penalty0 (1/2):\penalty0 187--192, 1957.

\bibitem[Lopes and Polson(2019)]{LopesPolson2019_Redux}
Hedibert~F. Lopes and Nicholas~G. Polson.
\newblock Bayesian hypothesis testing: Redux.
\newblock \emph{Brazilian Journal of Probability and Statistics}, 33\penalty0
  (4):\penalty0 745--755, 2019.
\newblock \doi{10.1214/19-BJPS442}.

\bibitem[Lynch and Sethuraman(1970)]{LynchSethuraman1970}
James Lynch and Jayaram Sethuraman.
\newblock A comparison of two sampling schemes when testing a simple hypothesis
  versus a simple alternative.
\newblock \emph{Sankhy\=a: The Indian Journal of Statistics, Series~A},
  32:\penalty0 299--310, 1970.

\bibitem[Polson et~al.(2026{\natexlab{a}})Polson, Sokolov, and
  Zantedeschi]{PolsonSokolovZantedeschi2026_Evalues}
Nicholas~G. Polson, Vadim Sokolov, and Daniel Zantedeschi.
\newblock Bayes, {E}-values and testing.
\newblock \emph{arXiv preprint arXiv:2602.04146}, 2026{\natexlab{a}}.
\newblock arXiv:2602.04146.

\bibitem[Polson et~al.(2026{\natexlab{b}})Polson, Sokolov, and
  Zantedeschi]{PolsonSokolovZantedeschi2026_GOF}
Nicholas~G. Polson, Vadim Sokolov, and Daniel Zantedeschi.
\newblock Bayes risk for goodness of fit tests.
\newblock \emph{arXiv preprint arXiv:2602.15297}, 2026{\natexlab{b}}.
\newblock arXiv:2602.15297.

\bibitem[Rubin and Sethuraman(1965{\natexlab{a}})]{RubinSethuraman1965_BRE}
Herman Rubin and Jayaram Sethuraman.
\newblock Bayes risk efficiency.
\newblock \emph{Sankhy\=a: The Indian Journal of Statistics, Series~A},
  27\penalty0 (3):\penalty0 347--356, 1965{\natexlab{a}}.

\bibitem[Rubin and Sethuraman(1965{\natexlab{b}})]{rubin1965probabilities}
Herman Rubin and Jayaram Sethuraman.
\newblock Probabilities of moderate deviations.
\newblock \emph{Sankhy\=a: The Indian Journal of Statistics, Series~A},
  27\penalty0 (3):\penalty0 325--346, 1965{\natexlab{b}}.

\bibitem[Sahu and Smith(2006)]{SahuSmith2006}
Sujit~K. Sahu and T.~M.~F. Smith.
\newblock A {B}ayesian method of sample size determination with practical
  applications.
\newblock \emph{Journal of the Royal Statistical Society: Series A (Statistics
  in Society)}, 169\penalty0 (2):\penalty0 235--253, 2006.
\newblock \doi{10.1111/j.1467-985X.2006.00408.x}.

\bibitem[Sellke et~al.(2001)Sellke, Bayarri, and Berger]{sellke2001calibration}
Thomas Sellke, Mar{\'\i}a~Jes{\'u}s Bayarri, and James~O. Berger.
\newblock Calibration of $p$ values for testing precise null hypotheses.
\newblock \emph{The American Statistician}, 55\penalty0 (1):\penalty0 62--71,
  2001.

\bibitem[Shafer(2021)]{shafer2021testing}
Glenn Shafer.
\newblock Testing by betting: A strategy for statistical and scientific
  communication.
\newblock \emph{Journal of the Royal Statistical Society Series A: Statistics
  in Society}, 184\penalty0 (2):\penalty0 407--431, 2021.
\newblock \doi{10.1111/rssa.12647}.

\bibitem[Tierney and Kadane(1986)]{Tierney1986}
Luke Tierney and Joseph~B. Kadane.
\newblock Accurate approximations for posterior moments and marginal densities.
\newblock \emph{Journal of the American Statistical Association}, 81\penalty0
  (393):\penalty0 82--86, 1986.

\end{thebibliography}

\end{document}